\DeclareMathAlphabet{\mathbbold}{U}{bbold}{m}{n}
\newcommand{\stiny}[1]{{\scalebox{.5}{#1}}}
\newcommand{\smtiny}[1]{{\scalebox{.63}{#1}}}
\newcommand{\argmin}{{\mathrm{argmin}}}
\newcommand*{\argminOp}{\operatornamewithlimits{argmin}\limits}
\newcommand*{\minOp}{\operatornamewithlimits{min}\limits}
\newcommand*{\maxOp}{\operatornamewithlimits{max}\limits}
\newcommand{\tr}{{\smtiny{$\mathsf{T}$ }}\!}
\newcommand{\zero}{\mathbf{0}}
\newcommand{\eye}{\mathbb{I}}
\newcommand{\eye}{\mathbf{I}}
\newcommand{\vc}[1]{{ \mathrm{#1} }}
\newcommand{\mx}[1]{{ \mathrm{#1} }}
\newcommand{\inner}[2]{{ \langle {#1,#2} \rangle}}
\newcommand{\trace}{\mathrm{tr}}
\newcommand{\Dscr}{{\mathscr{D}}}
\newcommand{\Jcal}{{\mathcal{J}}}
\newcommand{\Ncal}{{\mathcal{N}}}
\newcommand{\Rcal}{{\mathcal{R}}}
\newcommand{\Scal}{{\mathcal{S}}}
\newcommand{\Ucal}{{\mathcal{U}}}
\newcommand{\Vcal}{{\mathcal{V}}}
\newcommand{\Rbb}{{\mathbb{R}}}
\newcommand{\Zbb}{{\mathbb{Z}}}
\newtheorem{theorem}{Theorem}
\newtheorem*{theorem*}{Theorem}
\newtheorem*{definition*}{Definition}
\newtheorem{lemma}[theorem]{Lemma}
\newtheorem{remark}{Remark}
\newtheorem*{example*}{Example}
\newtheorem*{claim*}{Claim}
\newtheorem*{problem*}{Problem}
\newcommand{\mxA}{\mx{A}}
\newcommand{\mxB}{\mx{B}}
\newcommand{\mxD}{\mx{D}}
\newcommand{\mxE}{\mx{E}}
\newcommand{\mxK}{\mx{K}}
\newcommand{\mxR}{\mx{R}}
\newcommand{\vca}{\vc{a}}
\newcommand{\vcb}{\vc{b}}
\newcommand{\vcc}{\vc{c}}
\newcommand{\vcd}{\vc{d}}
\newcommand{\vcg}{\vc{g}}
\newcommand{\vcu}{\vc{u}}
\newcommand{\vcq}{\vc{q}}
\newcommand{\vcw}{\vc{w}}
\newcommand{\vcx}{\vc{x}}
\newcommand{\vcy}{\vc{y}}
\newcommand{\vcz}{\vc{z}}
\newcommand{\fro}{\mathrm{F}}
\newcommand{\nS}{n_{\Scal}}
\newcommand{\gLS}{{\vc{g}}^{\text{LS}}}
\newcommand{\gReg}{{\vc{g}}^{\text{Reg}}}
\newcommand{\gKRLS}{{\vc{g}}^{\text{KRLS}}}
\newcommand{\gRLS}{{\vc{g}}^{\text{RLS}}}
\newcommand{\gSRLS}{{\vc{g}}^{\text{SRLS}}}
\newcommand{\gRRegLS}{{\vc{g}}^{\text{RReg}}}
\newcommand{\gSRRegLS}{{\vc{g}}^{\text{SRReg}}}
\newcommand{\gKRRegLS}{{\vc{g}}^{\text{KRReg}}}
\newcommand{\nFIR}{n_{\vc{g}}}
\newcommand{\nD}{n_{\stiny{$\Dscr$}}}
\title{\LARGE \bf On Robustness of Kernel-Based Regularized System Identification}
\author{Mohammad Khosravi}%
\author{Roy S. Smith}
\affil{Automatic Control Laboratory, ETH Z\"urich 
	\authorcr
	\texttt{\{khosravm,rsmith\}@control.ee.ethz.ch}
}
\date{}
\begin{document}
	\maketitle
\begin{abstract}
This paper presents a novel feature of the kernel-based system identification method.
We prove that the regularized kernel-based approach for the estimation of a finite impulse response is equivalent to a robust least-squares problem with a particular uncertainty set defined in terms of the kernel matrix, and thus, it is called \emph{kernel-based uncertainty set}. We provide a theoretical foundation for the robustness of the kernel-based approach to  input disturbances. Based on robust and regularized least-squares methods, different formulations of system identification are considered, where the kernel-based uncertainty set is employed in some of them. We apply these methods to a case where the input measurements are subject to disturbances. Subsequently, we perform extensive numerical experiments and compare the results to examine the impact of utilizing kernel-based uncertainty sets in the identification procedure. The numerical experiments confirm that the robust least square identification approach with the kernel-based uncertainty set improves the robustness of the estimation to the input disturbances.  
\end{abstract}
\section{Introduction}
The system identification problem, as initially introduced by \cite{zadeh1956identification}, deals with building suitable mathematical models for dynamic systems based on measurement data. Due to the numerous important applications, the subject has received extensive attention \cite{LjungBooK2} developing identification tools and techniques, mainly based on the theory of mathematical statistics and optimization. One of these ideas is employing a suitable regularization to improve the estimation quality by resolving the overfitting issue and also integrating prior knowledge in the estimated model \cite{khosravi2021grad,pillonetto2014kernel, khosravi2021ROA}. The regularization plays this role by introducing a penalty for the model candidates not satisfying specific desired properties. For example, when the model is supposed to have low complexity, the system order is penalized by the rank or the nuclear norm of the Hankel matrix \cite{smith2014frequency}, or based on a similar argument, the atomic norm of a transfer function is utilized in \cite{shah2012linear}.
 
In the seminal work by Pillonetto and De Nicolao \cite{pillonetto2010new}, 
the idea of utilizing \emph{kernel-based} regularization is introduced and led to a paradigm shift in system identification \cite{ljung2020shift}. 
In this approach, the identification problem is framed as a regularized regression where the regularization term is defined based on the norm of a reproducing kernel Hilbert space (RKHS)   \cite{aronszajn1950theory} 
with a suitable structure. The kernel-based approach allows the employment of suitable Tikhonov-like regularizations and  resolves the bias-variance trade-off issue which was not addressed appropriately in previous works \cite{chiuso2016regularization}.
In this framework, the model complexity tuning, the counterpart of model order selection in the classical approaches, is performed efficiently by estimating the hyperparameters determining the kernel and the regularization term \cite{pillonetto2014kernel}.  
Moreover, various types of prior knowledge and desired features of the model can be included in the estimated model by employing suitable forms of the kernel and imposing appropriate constraints on the regression problem. 
The stability and the smoothness of the impulse response can be enforced by utilizing 
kernel functions such as stable splines \cite{pillonetto2014kernel,chen2018kernel}.
By designing kernels based on particular filters, frequency domain attributes such as the time constant and the resonant frequency can be included in the estimated model 
\cite{marconato2016filter}. In \cite{zheng2018positive,khosravi2019positive}, the positivity of the system is addressed by imposing structural constraints in the estimation problem.  The prior knowledge on the DC-gain of the system is considered in \cite{fujimoto2018kernel}. The kernel-based paradigm is extended to the systems with specific structures, e.g., for Hammerstein and Wiener systems \cite{risuleo2017nonparametric,risuleo2019bayesian},  networked systems \cite{ramaswamy2018local}, and periodic systems \cite{yin2020linear}.
Identifying models with low complexity is addressed based on the idea of multi-kernel regularization and sparse hyperparameter selection \cite{chen2014system,khosravi2020low}.
The idea of multiple regularizations together with advanced hyperparameter tuning techniques are used for improving the performance of model estimation \cite{hong2018multiple-SURE, chen2018regularized, mu2018asymptotic-GCV, khosravi2020regularized}. Recently, it has been empirical observed that kernel-based approaches can improve the robustness of the estimation to the input disturbances \cite{hiroe2020kernel}. 

Inspired by \cite{el1997robust}, we introduce a new aspect of kernel-based identification in this paper. More precisely, we show that the kernel-based regularized estimation of a finite impulse response is equivalent to a robust least-squares problem. This result provides a theoretical foundation for the robustness of the kernel-based approach with respect to the input disturbances. The uncertainty set obtained in the robust optimization problem has an interesting special shape and structure which is defined based on the regularization matrix. Accordingly, the set is called, the  \emph{kernel-based uncertainty set}. In order to study the nature of this set, we consider various identification approaches formulated  in terms of robust and regularized least-squares, and then apply these methods to an estimation case in which the input measurements are subject to disturbances. By means of Monte Carlo experiments, we compare these approaches to examine the impact of employing kernel-based uncertainty sets. The numerical experiments show that a robust least-squares approach with a kernel-based uncertainty set, not only inherits the interesting features of the kernel-based identification approach, but also improves the robustness of the estimation with respect to the input disturbances.

\section{Notation}\label{sec:Not}
The set of integers, the set of non-negative integer numbers, the set of real numbers, and the set of non-negative real numbers are denoted by $\Zbb$, $\Zbb_+$, $\Rbb$, and $\Rbb_+$, respectively.
The $n$-dimensional Euclidean space is denoted by $\Rbb^n$ and the set of $n$ by $m$ matrices is $\Rbb^{n\times m}$.
The zero vector is denoted by  $\zero$ and the identity matrix is denoted by $\eye$.
For a matrix $\mx{A}\in\Rbb^n$, $\trace(A)$ denotes the trace of $\mxA$.
For a non-singular matrix $\mxA$, the transpose of $\mxA^{-1}$ is $\mxA^{-\tr}$. 
Given a positive-definite matrix  $\mxK\in\Rbb^{n\times n}$,  we define an inner product on $\Rbb^n$ as $\inner{\vca}{\vcb}_{\mxK}:=\vca^\tr \mxK\vcb$, for any $\vca,\vcb\in\Rbb^n$. 
Subsequently, an induced norm, denoted by $\|\cdot\|_{\mxK}$, is defined as $\|\vca\|_{\mxK}:=(\vca^\tr \mxK\vca)^{\frac{1}{2}}$. When $\mxK=\eye$, we have the Euclidean norm which is denoted by $\|\cdot\|$. 
Similarly, we can define an inner product on $\Rbb^{m\times n}$  denoted by $\inner{\cdot}{\cdot}_{\mxK}$ and defined as $\inner{\mxA}{\mxB}_{\mxK}:=\trace(\mxA \mxK\mxB^\tr)$, for any $\mxA,\mxB\in\Rbb^{m\times n}$. 
The corresponding induced norm is shown by $\|\!\cdot\!\|_{\mxK}$. When $\mxK=\eye$, we have the Frobenius norm denoted by $\|\!\cdot\!\|_{\fro}$.
The expression $X\sim \Ncal(\mu,\Sigma)$ says that random variable $X$ has a Gaussian distribution with mean $\mu$ and covariance $\Sigma$. 
We denote the probability density and the conditional probability density by $p(\cdot)$ and $p(\cdot|\cdot)$,  respectively.

\section{Regularized System Identification}
Let $\Scal$ be a discrete-time single-input-single-output stable and causal linear time-invariant (LTI) system, with a transfer function  $G_{\Scal}(\vc{q})$ defined as
\begin{equation}
\begin{array}{c}	
G_{\Scal}(\vc{q}) := \sum_{k=0}^{\infty} g_k \vc{q}^k,
\end{array}
\end{equation}
where $\vcg_{\smtiny{$\Scal$}}:=(g_k)_{k\in\Zbb_+}$ denotes the impulse response of the system and $\vc{q}$ denotes the {\em forward shift operator}. 
Due to the stability of $\Scal$,
we know that $\vcg\in\ell_1(\Zbb_+)$, i.e.,
\begin{equation}
\sum_{k=0}^\infty |g_k| <\infty.	
\end{equation}
Therefore, for any $\epsilon>0$, there exists $\nS(\epsilon)\in\Zbb_+$ such that
$\sum_{k=\nS(\epsilon)}^\infty |g_k| <\epsilon$.
Accordingly, the infinite impulse response (IIR) can be truncated at a sufficiently high order $\nFIR\in \Zbb_+$ to approximate the system with a finite-length impulse response (FIR) denoted by $\vcg$ and defined as 
\begin{equation}
\vcg:=
\begin{bmatrix}
g_0&g_1&g_2&\ldots&g_{\nFIR-1}
\end{bmatrix}^\tr\in \Rbb^{\nFIR}.
\end{equation}
Thus, the finite impulse response (FIR) model of the system is as 
\begin{equation}
G(\vc{q}) := \sum_{k=0}^{\nFIR-1} g_k \vc{q}^k.
\end{equation}
\subsection{Identification Problem: FIR Formulation}
Let $\vcu:=(u_t)_{t\in \Zbb}$ be a bounded input signal given to the system $\Scal$. 
Also, let $\vcy:=(y_t)_{t\in\Zbb}$ be the corresponding output signal which is subject to measurement uncertainty. 
Accordingly, we have
\begin{equation}
\label{eqn:y_t=sum_kg_ku_t-k}
y_t = \sum_{k=0}^{\nFIR-1}g_k u_{t-k} + w_t, \qquad \forall t\in \Zbb,
\end{equation}
where $w_t$ corresponds to the measurement noise and the unmodeled part of the system. 
Note that when the output measurement is subject to bounded uncertainty, then due to the stability of the system $\vcy$, is a bounded signal.
For time instants $t=0,1,\ldots,\nD\!-\!1$, let assume the inputs and the measured outputs of the system are given. We define the set of data, denoted by $\Dscr$, as
\begin{equation}
\Dscr 
:= \Big\{(u_t,y_t)\ \Big| \ t=0,1,\ldots,\nD\!-\!1\Big\}.
\end{equation}
Following this, the identification problem is defined as estimating the FIR model of the system, $\vcg$, using $\Dscr$.

\subsection{Prediction Error Method}\label{sec:LS_or_PEM}
Let vector $\varphi_t$ be defined as
\begin{equation}
\varphi_t := 
\begin{bmatrix}
u_t & u_{t-1} & \ldots & u_{t-\nFIR+1}\\
\end{bmatrix}^\tr\in\Rbb^{\nFIR},
\end{equation}
for any $t\in\Zbb$.
For a candidate FIR model $\vcg$, one can introduce the one-step ahead prediction rule for the output at time instant $t$ as $\hat{y}(t|\vcg):=\varphi_t^\tr \vcg$.
Considering the set of data $\Dscr$, the quality of the prediction rule can be assessed by comparing the actual measured outputs with the predicted values. To this end, one can form an empirical loss, $\Vcal_{\Dscr}:\Rbb^{\nFIR}\to\Rbb$, for evaluating the prediction quality over set of data $\Dscr$, e.g., $\Vcal_{\Dscr}$ can be defined as sum of squared errors of predictions, i.e., one has
\begin{equation}\label{eqn:V_sum_quared_err}
\Vcal_{\Dscr}(\vcg) := \sum_{t=0}^{\nD-1}(y_t - \hat{y}(t|\vcg))^2.
\end{equation}
Then, one can estimate the FIR model by minimizing the empirical loss $\Vcal_{\Dscr}$. Define vectors $\vcy$ and $\vcw$ respectively  as
\begin{equation}\label{eqn:y}
\begin{split}
\vc{y} :=
\begin{bmatrix}
y_0&\cdots&y_{\nD-1}
\end{bmatrix}^\tr \in \Rbb^{\nD},
\end{split}
\end{equation}
and 
\begin{equation}\label{eqn:w}
\begin{split}
\vc{w} :=
\begin{bmatrix}
w_0&\cdots&w_{\nD-1}
\end{bmatrix}^\tr \in \Rbb^{\nD}.
\end{split}
\end{equation}
Then, due to \eqref{eqn:y_t=sum_kg_ku_t-k}, 
one can easily see that
\begin{equation}
	\vc{y} = \Phi \vc{g} + \vc{w},
\end{equation} 
where $\Phi\in \Rbb^{\nD\times \nFIR}$ is a Toeplitz matrix defined as follows
\begin{equation}\label{eqn:Phi_teoplitz}
\Phi := 
\begin{bmatrix}
\varphi_0^\tr\\
\varphi_1^\tr\\
\vdots\\
\varphi_{\nD-1}^\tr\\
\end{bmatrix}
=
\begin{bmatrix}
u_0&u_{-1}&\ldots&u_{-\nFIR+1}\\
u_1&u_{0} &\ldots&u_{-\nFIR+2}\\
\vdots&\vdots&\ddots&\vdots\\
u_{\nD-1}&u_{\nD-2}&\ldots&u_{\nD-\nFIR}\\
\end{bmatrix}.
\end{equation}
Consequently, regarding the empirical loss, we have $\Vcal_{\Dscr}(\vcg) = \|\vcy-\Phi\vcg\|^2$.
Here, for the sake of simplicity, one can assume that the system is initially at rest, i.e., $u_t=0$, for all $t<0$, and thus, given set of data $\Dscr$, matrix $\Phi$ is known. 
Accordingly, estimating $\vcg$ by minimizing the empirical loss is well defined and leads to the following least-squares (LS) 
\begin{equation}\label{eqn:LS}
\begin{split}
\gLS 
&:= \argmin_{\vc{g}\in \Rbb^{\nFIR}}\ \|\vc{y} - \Phi\vc{g}\|^2
\\&= (\Phi^\tr\Phi)^{-1}\Phi^\tr\vc{y},
\end{split}
\end{equation}
where the last equality holds when $\Phi^\tr\Phi$ is non-singular.
When $\Phi^\tr\Phi$ is not full-rank, the solution is not unique
and the estimation problem is ill-posed. This issue happens for example when $\nD$ is small. If the condition number of $\Phi^\tr\Phi$ is high, then the estimation $\gLS$ is sensitive to noise. 
These issues can be resolved by including an appropriate regularization in \eqref{eqn:LS} \cite{pillonetto2014kernel, chen2018kernel}.

\subsection{Regularization Method}\label{sec:RegLS_method}
Let $\mxK$ be a positive definite matrix. Define the regularization function $\Rcal:\Rbb^{\nFIR}\to \Rbb$ as  $\Rcal(\vcg) = \vcg^\tr\mxK^{-1}\vcg$, for any $\vcg\in\Rbb^{\nFIR}$.
In order to introduce the regularized estimation problem, we add this term to the empirical loss and obtain a regularized loss function $\Jcal:\Rbb^{\nFIR}\to \Rbb$ as
\begin{equation}\label{eqn:J_general}
\Jcal(\vcg) := \Vcal_{\Dscr}(\vcg)+\lambda \Rcal(\vcg), \qquad \forall \vcg\in\Rbb^{\nFIR},
\end{equation}
where $\lambda>0$ is the regularization weight.

Let $\Vcal_{\Dscr}$ be defined as in \eqref{eqn:V_sum_quared_err}. Then, the regularized estimation cost in \eqref{eqn:J_general} can be written as
\begin{equation}\label{eqn:J_RegLS}
\Jcal(\vcg) := \|\vcy-\Phi \vcg\|^2+\lambda \vcg^\tr\mxK^{-1}\vcg.
\end{equation}
Then, the regularized estimation of the FIR is 
\begin{equation}\label{eqn:RegLS}
\begin{split}
\gReg 
& := \argminOp_{\vc{g}\in\Rbb^{\nFIR}}\ 
\|\vcy-\Phi \vcg\|^2+\lambda \vcg^\tr\mxK^{-1}\vcg,\\
&= (\Phi^\tr\Phi + \lambda \mxK^{-1})\Phi^\tr\vcy. 
\end{split}
\end{equation}

The regularization matrix $\mxK$, also known as the {\em kernel matrix}, 
integrates in to the estimated impulse response additionally available prior information and desired attributes such as stability and smoothness. 
Additionally, it can reduce the variance of the estimation and resolve 
the potential ill-posedness of the problem and the bias-variance trade-off issue. 
In the literature, various methods are introduced for determining the regularization matrix $\mxK$ \cite{pillonetto2014kernel,zorzi2018harmonic,chen2018kernel,marconato2016filter}, e.g., by employing common kernels such as  {\em tuned/correlated} (TC) and  {\em stable spline} (SS) \cite{pillonetto2014kernel}, or designing $\mxK$ can be based on multiple regularizations \cite{hong2018multiple-SURE,chen2014system,chen2018regularized,khosravi2020low,khosravi2020regularized}.

\section{Robustness of Kernel-Based Regularized System Identification}
\label{sec:Robustness_RegLS}
In this section, we show that the kernel-based regularized identification is equivalent to a robust least-squares estimation with a specific structure. 
\begin{theorem}\label{thm:RegLS_is_KRLS}
There exist $\rho>0$ such that
the regularized estimation of the impulse response derived in \eqref{eqn:RegLS} is the solution of the following robust least-squares problem
\begin{equation}\label{eqn:thm_robust_LS_1}
	\minOp_{\vcg\in\Rbb^{\nFIR}}\ 
	\maxOp_{\substack{\Delta\in \Rbb^{\nD\times \nFIR}\\ \|\Delta\|_{\mxK}\le \rho}}\ 
	\|(\Phi+\Delta)\vcg-\vcy\|.
\end{equation}
\end{theorem}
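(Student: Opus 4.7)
The plan is to evaluate the inner maximization in \eqref{eqn:thm_robust_LS_1} in closed form, reducing the robust problem to a minimization of a sum of two norms, and then match its first-order optimality condition to that of \eqref{eqn:RegLS} by choosing $\rho$ appropriately as a function of $\lambda$ and of the solution $\gReg$.

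For fixed $\vcg$, I would first reduce $\|\Delta\|_{\mxK}$ to the Frobenius norm via the substitution $\widetilde{\Delta}:=\Delta\mxK^{1/2}$, which gives $\|\widetilde\Delta\|_{\fro}=\|\Delta\|_{\mxK}$ and $\Delta\vcg=\widetilde\Delta\,\mxK^{-1/2}\vcg$. Writing $\tilde{\vcg}:=\mxK^{-1/2}\vcg$ and $\vcr:=\Phi\vcg-\vcy$, the inner maximization becomes
\begin{equation*}
\max_{\|\widetilde\Delta\|_{\fro}\le\rho}\ \|\vcr+\widetilde\Delta\tilde{\vcg}\|.
\end{equation*}
The triangle inequality yields the upper bound $\|\vcr\|+\rho\|\tilde{\vcg}\|$, and in the generic case ($\vcr\neq \zero$ and $\tilde{\vcg}\neq\zero$) this bound is attained by the rank-one choice $\widetilde{\Delta}^\star=\rho\,\vcr\,\tilde{\vcg}^\tr/(\|\vcr\|\|\tilde{\vcg}\|)$, so the maximum equals
\begin{equation*}
\|\Phi\vcg-\vcy\|+\rho\,\|\mxK^{-1/2}\vcg\|.
\end{equation*}
The degenerate cases ($\vcr=\zero$ or $\vcg=\zero$) are handled separately by direct inspection. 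Thus \eqref{eqn:thm_robust_LS_1} is equivalent to minimizing $F_\rho(\vcg):=\|\Phi\vcg-\vcy\|+\rho(\vcg^\tr\mxK^{-1}\vcg)^{1/2}$ over $\Rbb^{\nFIR}$.

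For the second step, I would compare first-order conditions. The function $F_\rho$ is convex; at any point where $\Phi\vcg\neq\vcy$ and $\vcg\neq\zero$ it is differentiable, and vanishing gradient reads
\begin{equation*}
\Phi^\tr(\Phi\vcg-\vcy)\,\|\mxK^{-1/2}\vcg\|+\rho\,\|\Phi\vcg-\vcy\|\,\mxK^{-1}\vcg=\zero.
\end{equation*}
The normal equation for the regularized problem \eqref{eqn:RegLS} is $\Phi^\tr(\Phi\gReg-\vcy)+\lambda\mxK^{-1}\gReg=\zero$. Setting
\begin{equation*}
\rho\;:=\;\lambda\,\frac{\|\mxK^{-1/2}\gReg\|}{\|\Phi\gReg-\vcy\|}
\end{equation*}
makes these two equations coincide at $\vcg=\gReg$, so $\gReg$ is a stationary point, hence a global minimizer, of $F_\rho$. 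I would briefly verify that $\rho>0$ (which follows provided $\gReg\neq\zero$ and $\Phi\gReg\neq\vcy$, both generic conditions that can be justified from the data model) and handle the trivial edge cases separately so that the existence claim in the theorem holds in full generality.

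The main obstacles I anticipate are (i) justifying the rank-one maximizer rigorously—i.e., turning the triangle-inequality bound into an equality by an explicit construction, including the $\tilde{\vcg}=\zero$ and $\vcr=\zero$ corner cases where the maximizer is non-unique; and (ii) ensuring that the $\rho$ identified from the optimality comparison is strictly positive and finite. The latter is the only place where the theorem's purely existential statement (``there exists $\rho>0$'') really needs care, since the formula for $\rho$ degenerates when the residual or the estimate itself vanishes.
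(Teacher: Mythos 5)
Your proposal is correct, and its first half coincides with the paper's: the closed-form evaluation of the inner maximum as $\|\Phi\vcg-\vcy\|+\rho\,\|\mxK^{-1/2}\vcg\|$ is exactly the content of Lemma~\ref{lem:inner_problem_robust}, proved there by the same triangle-inequality-plus-rank-one-attainment argument, with the square-root/Cholesky factor $\mxR$ of $\mxK$ playing the role of your $\mxK^{1/2}$ (note $\|\mxR^{-1}\vcg\|=\|\mxK^{-1/2}\vcg\|$, so the expressions agree). Where you genuinely diverge is the second half. The paper recasts the reduced problem as a second-order cone program, verifies Slater's condition, passes to the dual, and extracts the stationarity equation from strong duality together with the tightness conditions of Cauchy--Schwarz on the dual variables; you instead differentiate the convex objective $F_\rho$ directly and match its first-order condition to the normal equation of \eqref{eqn:RegLS}. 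Both routes land on the same stationarity identity and the same value $\rho=\lambda\|\mxK^{-1/2}\gReg\|/\|\Phi\gReg-\vcy\|$, which the paper only records in the remark after the theorem. Your route is more elementary, and arguably cleaner logically: the paper argues forward from an unknown optimizer $\vcg^*$ to a multiplier $\mu$ depending on $\vcg^*$ and then asserts that $\rho$ ``can be chosen'' so that $\mu=\lambda$, which is slightly circular since $\vcg^*$ itself depends on $\rho$; you avoid this by defining $\rho$ explicitly from the already-computed $\gReg$ and verifying that it is a stationary, hence global, minimizer of the convex $F_\rho$. Both arguments rest on the same genericity assumptions ($\gReg\neq\zero$, equivalently $\Phi^\tr\vcy\neq\zero$, and $\Phi\gReg\neq\vcy$), which the paper divides by silently and you at least flag explicitly; neither proof addresses uniqueness of the minimizer of the robust problem, so ``is the solution'' should be read as ``is a solution'' in both.
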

\begin{proof}
Let $\mxR$ be the square root of $\mxK$ or the lower triangular matrix in the Cholesky decomposition of $\mxK$. Then, the inner problem in 	\eqref{eqn:thm_robust_LS_1} can be shown as to be
\begin{equation}\label{eqn:thm_robust_LS_2}
\maxOp_{\substack{\Delta\in \Rbb^{\nD\times \nFIR}\\ \|\Delta\mxR\|_{\fro}\le \rho}}\ 
\|(\Phi+\Delta)\vcg-\vcy\|^2.
\end{equation}
Due to Lemma \ref{lem:inner_problem_robust} (see Appendix \ref{sec:Lemma_appendix}), 
the optimal value of \eqref{eqn:thm_robust_LS_2} equals  $\|\Phi\vcg-\vcy\| + \|\rho\mxR^{-1}\vcx\|$.
Accordingly, the inner problem is equivalent to the following convex program
\begin{equation}\label{eqn:thm_robust_LS_3}
\begin{split}
\minOp_{a,b} 
& \quad
a
\\
\mathrm{s.t.}
& \quad
\|\Phi\vcg-\vcy\| \le a-b,\ \ 
\\
& \quad
\|\rho\mxR^{-1}\vcg\|\le b.
\end{split} 
\end{equation}
Consequently, the robust optimization problem \eqref{eqn:thm_robust_LS_1} can be written as in the following equivalent form
\begin{equation}\label{eqn:thm_robust_LS_4}
\begin{split}
\minOp_{\substack{\vcg\in\Rbb^{\nFIR}\\a,b\in\Rbb} }
& \quad
a
\\
\mathrm{s.t.}\ \!\ \quad
& \quad
\|\Phi\vcg-\vcy\| \le a-b,\ \ 
\\& \quad
\|\rho\mxR^{-1}\vcg\|\le b.
\end{split} 
\end{equation} 
Note that at the optimal solution $(\vcg^*,a^*,b^*)$, we have 
$a^*=\|\Phi\vcg^*-\vcy\|+\rho\|\mxR^{-1}\vcg^*\|$, where $a^*$ is the optimal value of \eqref{eqn:thm_robust_LS_4} as well.
Defining the vector $\vcx$ as $\vcx:=\begin{bmatrix}\vcg^\tr&a&b\end{bmatrix}^\tr$, we can re-write the problem \eqref{eqn:thm_robust_LS_4} in the following form
\begin{equation}\label{eqn:thm_robust_LS_5}
\begin{split}
\minOp_{\vcx\in \Rbb^{\nFIR+2}}
& \quad
\begin{bmatrix}\zero^\tr&1&0\end{bmatrix}\vcx
\\
\mathrm{s.t.}\ \
& \quad
\|\begin{bmatrix}\Phi&\zero&\zero\end{bmatrix} \vcx-\vcy\| 
\le 
\begin{bmatrix}\zero^\tr&1&-1\end{bmatrix}\vcx,
\\
& \quad
\|\begin{bmatrix}\rho\mxR^{-1}&\zero&\zero\end{bmatrix} \vcx\| 
\le \begin{bmatrix}\zero^\tr&0&1\end{bmatrix} \vcx.\\
\end{split}
\end{equation}
If $\epsilon>0$, then $\vcx = \begin{bmatrix}\zero^\tr&\|\vcy\|+1+\epsilon&1\end{bmatrix}^\tr$ is strictly feasible  for \eqref{eqn:thm_robust_LS_5}, and also, the problem is bounded. Therefore, the Slater's conditions hold.
The dual of \eqref{eqn:thm_robust_LS_5} is as follows
\begin{equation*}\label{eqn:thm_robust_LS_6}
\begin{split}
\maxOp_{s,t,\vcz,\vcw} 
& \quad
\vcy^\tr \vcz\\
\mathrm{s.t.}\
& \quad\!\!
\begin{bmatrix}\Phi^\tr\\\zero^\tr\\\zero^\tr\end{bmatrix}\vcz
+
\begin{bmatrix}\rho\mxR^{-\tr}\\\zero^\tr\\\zero^\tr\end{bmatrix}\vcw +
\begin{bmatrix}\zero\\1\\-1\end{bmatrix}s
+
\begin{bmatrix}\zero\\0\\1\end{bmatrix}
t=
\begin{bmatrix}\zero\\1\\0\end{bmatrix},
\\&\quad  \|\vcz\|\le s, 
\\&\quad  \|\vcw\|\le t,
\\&\quad  \vcz\in\Rbb^{\nD},\vcw\in\Rbb^{\nFIR}, s,t\in\Rbb,
\end{split}
\end{equation*}
which simplifies to the following optimization problem
\begin{equation}\label{eqn:thm_robust_LS_7}
\begin{split}
\maxOp_{\substack{\vcz\in\Rbb^{\nD}\\\vcw\in\Rbb^{\nFIR}} }
&\ 
\vcy^\tr \vcz\\
\mathrm{s.t.}\ \ \ \ \
&\ \Phi^\tr \vcz + \rho\mxR^{-\tr}\vcw = \zero,
\\& \|\vcz\|\le 1,
\\& \|\vcw\| \le  1.
\end{split}
\end{equation}
Note that \eqref{eqn:thm_robust_LS_7} is feasible and bounded. 
Let $(\vcz^*,\vcw^*)$ be the optimal solution for the dual problem \eqref{eqn:thm_robust_LS_7}. 
Due to strong duality, we know that 
$a^* = \vcy^\tr\vcz^*$. 
Accordingly, for the optimal solutions of \eqref{eqn:thm_robust_LS_5} and \eqref{eqn:thm_robust_LS_7}, we have that
\begin{equation}\label{eqn:thm_robust_LS_8}
\begin{split}
\|\Phi\vcg^*-\vcy\|+\rho\|\mxR^{-1}\vcg^*\|
&= 
\vcy^\tr\vcz^*
\\&=
(\vcy-\Phi\vcg^*)^\tr\vcz^*
+\vcg^*{}^\tr\Phi^\tr\vcz^*
\\&=
(\vcy-\Phi\vcg^*)^\tr\vcz^*
-\rho\vcg^*{}^\tr\mxR^{-\tr}\vcw^*.
\end{split}	
\end{equation}
Since $\|\vcz^*\|,\|\vcw^*\|\le 1$, due to Cauchy-Schwartz inequality, \eqref{eqn:thm_robust_LS_8} holds if and only if
\begin{equation}
\vcz^* = \frac{\vcy-\Phi\vcg^*}{\|\vcy-\Phi\vcg^*\|}, \qquad
\vcw^* = - 
\frac{\mxR^{-1}\vcg^*}{\|\mxR^{-1}\vcg^*\|}.
\end{equation} 
Following this and due to the equality constraint in \eqref{eqn:thm_robust_LS_7}, one can see that 
\begin{equation}\label{eqn:thm_robust_LS_10}
\begin{split}
\zero &= 
\Phi^\tr\vcz^*+\rho\mxR^{-\tr}\vcw^*	
\\&=
\Phi^\tr\frac{\vcy-\Phi\vcg^*}{\|\vcy-\Phi\vcg^*\|}
-
\rho\mxR^{-\tr}
\frac{\mxR^{-1}\vcg^*}{\|\mxR^{-1}\vcg^*\|}
\\&
=
\frac{\Phi^\tr\vcy-\Phi^\tr\Phi\vcg^*}{\|\vcy-\Phi\vcg^*\|}
-
\frac{\rho\mxK^{-1}\vcg^*}{\|\mxR^{-1}\vcg^*\|}.
\end{split}	
\end{equation}
Rearranging the terms in \eqref{eqn:thm_robust_LS_10}, one has
\begin{equation}\label{eqn:thm_robust_LS_11}
\begin{split}
\Phi^\tr\Phi\vcg^*
+
\frac{\rho\|\vcy-\Phi\vcg^*\|}{\|\mxR^{-1}\vcg^*\|}\mxK^{-1}\vcg^* &= \Phi^\tr\vcy.
\end{split}	
\end{equation}
Subsequently, it follows that
\begin{equation}\label{eqn:thm_robust_LS_12}
\vcg^* = \Big(\Phi^\tr\Phi
+
\mu\mxK^{-1}\Big)^{-1}\Phi^\tr\vcy.
\end{equation}
where  $\mu = \frac{\rho\|\vcy-\Phi\vcg^*\|}{\|\mxR^{-1}\vcg^*\|}$.
By choosing $\rho$ appropriately, one can set $\mu = \lambda $. Consequently, due to \eqref{eqn:RegLS}, we have $\vcg^* = \gReg$. This concludes the proof.
\end{proof}
\begin{remark}
From the proof of Theorem \ref{thm:RegLS_is_KRLS}, one can see that
\begin{equation}
\rho := 
\lambda\frac{\|\mxR^{-1}\gReg\|}
{\|\Phi\gReg-\vcy\|}
=
\lambda\frac{\left(\gReg{}^\tr\mxK^{-1}\gReg\right)^{\frac{1}{2}}}
{\|\Phi\gReg-\vcy\|}.
\end{equation}
Accordingly, when the model fits well to the data, the residuals as well as $\|\Phi\gReg-\vcy\|$ are small. Subsequently $\rho$ has a large value and the estimation is robust.
\end{remark}
\begin{remark}
The main ingredient in the definition of the optimization problem \eqref{eqn:thm_robust_LS_1} is the kernel matrix $\mxK$. Accordingly, we call this method \emph{kernel-based robust least-squares}.
\end{remark}

\section{Kernel-Based Uncertainty Set}
In this section, we further study the \emph{kernel-based uncertainty set} introduced in Section \ref{sec:Robustness_RegLS}. 

Let $\Ucal_{\rho}$ denote the uncertainty set employed in \eqref{eqn:thm_robust_LS_1}, i.e., $\Ucal_{\rho}$ is defined as
\begin{equation}\label{eqn:Urho}
	\Ucal_{\rho} := \Big\{\Delta\in\Rbb^{\nD\times\nFIR}
	\ \Big|\
	\|\Delta\|_{\mxK}^2 = \trace(\Delta\mxK\Delta^\tr)\le \rho^2\Big\}.
\end{equation}
One can see that $\Ucal_{\rho}$ is a hyperball in the inner product space $(\Rbb^{\nD\times\nFIR},\inner{\cdot}{\cdot}_{\mxK})$ which is  centered at the origin and has radius $\rho$.
The inner product $\inner{\cdot}{\cdot}_{\mxK}$ is defined based on the kernel matrix $\mxK$ and determines the geometry of the space as well as the shape and the structure of  $\Ucal_{\rho}$.
 
\begin{figure}[t]
\begin{center}
\includegraphics[width=0.35\textwidth]{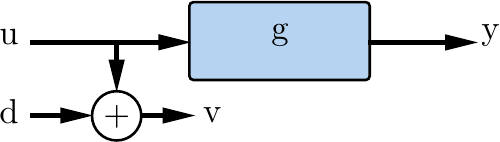}
\end{center}
\caption{System with disturbance in input measurements.}
\label{fig:BD01}
\end{figure}
In order to investigate the nature of the uncertainty set $\Ucal_{\rho}$, we consider the identification problem where the measurements of the input signal are subject to disturbances, as shown in Figure \ref{fig:BD01}. More precisely, at time instant $t$, the measured input is $v_t = u_t+d_t$, where $u_t$ is the value of true input and $d_t$ is the value of measurement disturbance, and also, let $y_t=\sum_{k=0}^{\nFIR-1}g_ku_{t-k}$ be the noiseless measured output of the system, for any $t=0,\ldots,\nD\!-\!1$. 
Let $\Psi,\Delta\in\Rbb^{\nFIR\times\nD}$ be Toeplitz matrices defined similarly to $\Phi$
as in \eqref{eqn:Phi_teoplitz} based on the measured inputs, disturbances and the unknown history of the input signal. Therefore, we have $\Psi = \Phi+\Delta$ and $\vcy=\Phi\vcg$, and subsequently, one can write $\vcy$ in two equivalent forms,  
\begin{equation}\label{eqn:yPsiDeltag}
\vcy=(\Psi-\Delta)\vcg,
\end{equation}
and 
\begin{equation}\label{eqn:yPhigDeltag}
\vcy=\Psi\vcg + (-\Delta\vcg).
\end{equation}
Motivated by \eqref{eqn:yPsiDeltag} and \eqref{eqn:yPhigDeltag}, one may propose variations on robust and regularized least-squares methods for estimating $\vcg$.
In the following, we provide the details of some these formulations.
\\1) Ordinary Least-Squares (LS): Motivated by \eqref{eqn:yPhigDeltag} and based on a discussion similar to Section \ref{sec:LS_or_PEM}, one may propose a least-squares approach to estimate $\vcg$ as following
\begin{equation}\label{eqn:LS_Psi}
	\gLS := \argminOp_{\vcg\in \Rbb^{\nFIR}}\|\vcy-\Psi \vcg\|^2.
\end{equation}
\\2)
Regularized Least-Squares (RegLS): Similar to the least-squares case and the arguments provided in Section \ref{sec:RegLS_method}, a regularized least-squares approach can be proposed for estimating $\vcg$ as following
\begin{equation}\label{eqn:RegLS_Psi}
\gReg := \argminOp_{\vc{g}\in\Rbb^{\nFIR}}\ 
\|\vcy-\Psi \vcg\|^2+\lambda \vcg^\tr\mxK^{-1}\vcg.
\end{equation}
\\3)
Robust Least-Squares (RLS): Considering \eqref{eqn:yPsiDeltag} and \eqref{eqn:yPhigDeltag} along with the least-squares discussion, 
one may propose an estimation approach formulated as a standard robust least-squares problem as following
\begin{equation}\label{eqn:RLS_Psi}
	\gRLS:=
	\argminOp_{\vc{g}\in\Rbb^{\nFIR}}\Bigg[ 
	\maxOp_{\substack{\Delta\in \Rbb^{\nD\times \nFIR}\\ \|\Delta\|_{\fro}\le \rho}} 
	\|\vcy-(\Psi-\Delta)\vcg\|^2\Bigg].
\end{equation}
\\4)
Structured Robust Least-Squares (SRLS): Since the uncertainty matrix $\Delta$ in \eqref{eqn:yPsiDeltag} has a Toeplitz structure, we may 
formulate a structured robust least-squares for the estimation of $\vcg$. 
To this end, given $\rho\in\Rbb_+$, we define the \emph{structured uncertainty set}, $\Scal_{\rho}$,  as
\begin{equation*}\label{eqn:Srho}
\Scal_{\rho}:=\!\!  \Bigg\{
\sum_{k=-\nFIR+1}^{\nD-1}\!\! \delta_k \mxE^{(k)}
\Big|
\sum_{k=-\nFIR+1}^{\nD-1}\!\!  \delta_k^2\le \rho
\Bigg\}, 
\end{equation*}
where for each $k=-\nFIR+1,\ldots,\nD-1$,  $\mxE^{(k)}\in \Rbb^{\nD\times\nFIR}$ is a Toeplitz matrix with entries in $\{0,1\}$ such that  $\mxE^{(k)}_{i,j}=1$ only when $i-j=k$.
Then, the estimation problem is 
\begin{equation}\label{eqn:SRLS_Psi}
\gSRLS:=
\argminOp_{\vc{g}\in\Rbb^{\nFIR}}\Bigg[ 
\maxOp_{\Delta\in \Scal_{\rho}} 
\|\vcy-(\Psi-\Delta)\vcg\|^2\Bigg].
\end{equation}
\\5)
Kernel-based Robust Least-Squares (KRLS): According to \eqref{eqn:yPsiDeltag} and the discussion in Section \ref{sec:Robustness_RegLS} introducing the kernel-based uncertainty set $\Ucal_{\rho}$ (see equation \eqref{eqn:Urho}), we can formulate an estimation approach for $\vcg$ as the following robust least-squares problem
\begin{equation}\label{eqn:KRLS_Psi}
	\gKRLS:=
	\argminOp_{\vc{g}\in \Rbb^{\nFIR}}\Bigg[ 
	\maxOp_{\Delta\!\ \in\!\ \Ucal_{\rho}} 
	\|\vcy-(\Psi-\Delta)\vcg\|^2\Bigg].
\end{equation}
\\6) 
Robust Regularized Least-Squares (RRegLS): By including regularization in RLS method, we obtain a robust regularized least-squares estimation approach as follows,
\begin{equation}\label{eqn:RRegLS_Psi}
\begin{split}	
&\gRRegLS:=
\argminOp_{\vc{g}\in\Rbb^{\nFIR}}\Bigg[ 
\maxOp_{\substack{\Delta\in \Rbb^{\nD\times \nFIR}\\ \|\Delta\|_{\fro}\le \rho}}
\|\vcy-(\Psi\!-\!\Delta)\vcg\|^2+\lambda \vcg^\tr\mxK^{-1}\vcg\Bigg].
\end{split}
\end{equation}
\\7)
Structured Robust Regularized Least-Squares (SRRegLS): Similar to the previous case, by considering the regularization in SRLS approach, one can obtain a structured robust regularized least-squares estimation as the following optimization problem
\begin{equation}\label{eqn:SRRegLS_Psi}
\begin{split}	
&\gSRRegLS:=
\argminOp_{\vc{g}\in\Rbb^{\nFIR}}\Bigg[
\maxOp_{\Delta\in \Scal_{\rho}} 
\|\vcy-(\Psi-\Delta)\vcg\|^2+\lambda \vcg^\tr\mxK^{-1}\vcg\Bigg].
\end{split}
\end{equation}
\\8)
Kernel-based Robust Regularized Least-Squares (KRRegLS): One can add the regularization term in KRLS approach, and subsequently, obtain a kernel-based robust regularized least-squares method for the estimation of $\vcg$ as the following optimization problem
\begin{equation}\label{eqn:KRRegLS_Psi}
\begin{split}	
&\gKRRegLS:=
\argminOp_{\vc{g}\in\Rbb^{\nFIR}}\Bigg[
\maxOp_{\Delta\!\ \in\!\ \Ucal_{\rho}} 
\|\vcy-(\Psi-\Delta)\vcg\|^2+\lambda \vcg^\tr\mxK^{-1}\vcg\Bigg].
\end{split}
\end{equation}
\begin{remark}
Each of the above optimization problems 
is either convex or can be reformulated as a convex program.
\end{remark}

\begin{figure}[t]
	\begin{center}
		\includegraphics[width=0.55\textwidth]{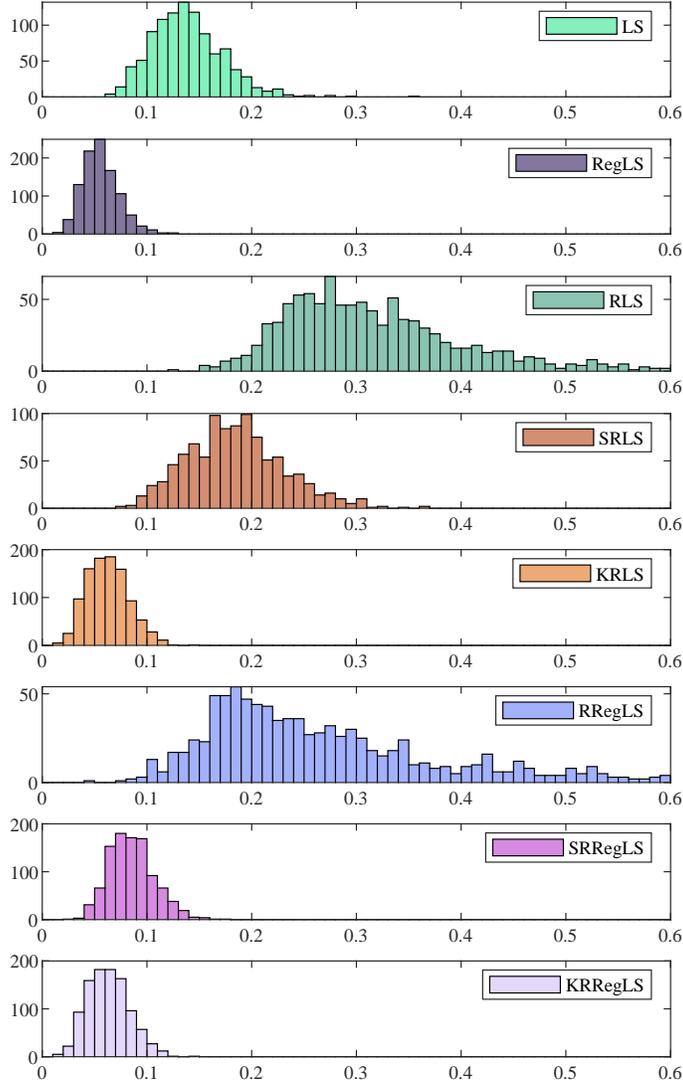}
	\end{center}
	\caption{Histograms of normalized RMSE for different estimation methods.}
	\label{fig:hist1}
\end{figure}

The differences between these formulations  depend on the choice of uncertainty set and also whether a kernel-based regularization is employed or not. 
In order to investigate the impact of the  kernel-based uncertainty set, employed in KRLS and KRRegLS, we compare the performance of these approaches by means of a Monte Carlo numerical experiment.
To this end, we consider the following system \cite{ljung2020shift},
\begin{equation}
\mx{G}(\vcq) = \frac{0.02008 + 0.04017\vcq^{-1} + 0.02008\vcq^{-2}}{1 - 1.561\vcq^{-1} + 0.6414\vcq^{-2}},
\end{equation}
and perform 1000 experiment runs.
In each of these experiments, the system is simulated with a PRBS signal of length $\nD=127$ and the input is measured with measurement disturbance $d_t\sim\Ncal(0,\sigma_d^2)$ where $\sigma_d=0.1$, for $t=0,\ldots,\nD\!-\!1$. The output measurement is taken to be noiseless. 
In order to identify the system, we approximate $\mx{G}$ with a FIR $\vcg$ of length $\nFIR=80$, and then apply each of the estimation methods formulated above.
The value of $\rho$ is calculated based on the true disturbances.

Figure \ref{fig:hist1} shows the histograms of normalized root mean squared errors (RMSE), $\frac{\|\hat{\vcg}-\vcg\|}{\|\vcg\|}$, for each of the above methods. 
For all of the histograms, the x-axis range is taken as $[0,0.6]$ to visualize the results better, however, the RLS and RRegLS have  values above $0.6$. In order to evaluate the quality of estimation, we employ R-squared metric defined as follows
\begin{equation}
\mathrm{R^2}(\hat{\vcg}):= 100 \times \bigg[1- \frac{\|\hat{\vcg}-\vcg\|}{\|\vcg-\bar{\vcg}\|}\bigg],
\end{equation}
where $\bar{\vcg}\in\Rbb^{\nFIR}$ is a vector such that each of its entries equals to $\frac{1}{\nFIR}\sum_{k=0}^{\nFIR-1}g_k$, i.e., the average of entries in $\vcg$. 
Figure \ref{fig:box1} demonstrates and compares the fitting results.
The values of bias, variance and MSE for each of the above estimation methods are provided in Table \ref{tbl:mc}. 

\begin{figure}[t]
	\begin{center}
		\includegraphics[width=0.65\textwidth]{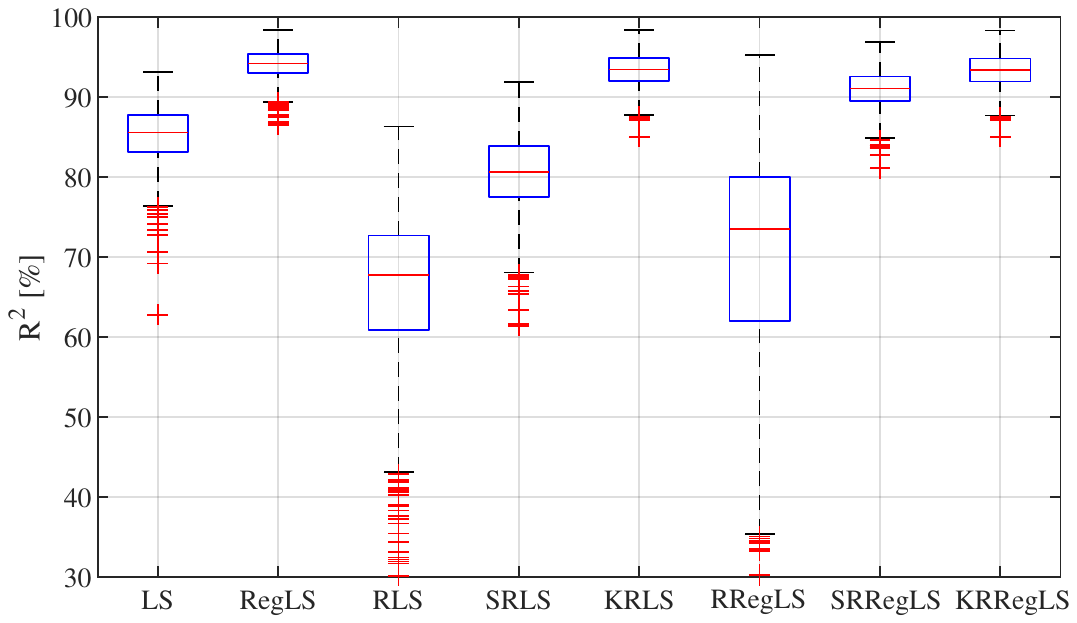}
	\end{center}
	\caption{Comparison of fitting performances.}
	\label{fig:box1}
\end{figure}

\section{Discussion}
Based on the results of Monte Carlo experiments, we have the following observations:

\begin{itemize}
	\item From Figure \ref{fig:hist1}, we observe similar behaviors for RegLS and KRLS, which is expected due to Theorem \ref{thm:RegLS_is_KRLS}. Moreover, the resulting variances are considerably smaller comparing to LS case
	demonstrating the robustness of the RegLS and KRLS approaches. 
	
	\item Note from Figure \ref{fig:hist1}, Figure \ref{fig:box1} and Table \ref{tbl:mc} that na\"ive formulations of robust optimization do not necessarily improve the robustness with respect to the input measurement disturbance.
	Indeed, the performance of RLS and RRegLS is significantly worse than LS. This  can be explained by the nature of robust optimization which attempts to minimize the worst case cost. 
	Accordingly, the shape and the structure of uncertainty set plays a major role in the performance of estimation approaches formulated as a robust optimization.
	This argument is supported by the performance improvements when RLS is compared to SRLS, or RRegLS is compared to SRRegLS.
	
	\item Comparing the estimation performances of RLS, SRLS and KRLS, one may conjecture that the right choices of uncertainty set for the estimation problems concerning system identification should have the same structure as the kernel-based uncertainty set.
	Comparing the performance of RRegLS, SRRegRS, and KRRegLS provides further support for this argument.
	
	\item By comparing RLS and RRegLS, one can see that the choice of uncertainty set plays a more dominant role the regularization term in the estimation method.
	
	\item From Table \ref{tbl:mc}, one can see that the KRLS and KRRegLS methods have the smallest  estimation variance and subsequently, maximal robustness to the input measurement disturbances. This observation highlights the impact of kernel-based uncertainty set. 
	Also, the bias-variance trade-off for these approaches is close to the RegLS method.
	Moreover, one can see that including regularization in KRLS which results in KRRegLS does not improve the estimation performance significantly, at least for the current setting where the output measurement is noiseless.
\end{itemize}

\begin{table}[t]
	\renewcommand{\arraystretch}{1.0}
	\centerline{
		\begin{tabular}{lccccc}
			\hline&
			\textbf{$\text{Bias}$}&\textbf{$\text{Var}$}& 
			\textbf{$\text{MSE}$}&\textbf{Unc.}&\textbf{Reg.}\\&
			{$[\times 10^{4}]$}&{$[\times 10^{4}]$}&{$[\times 10^{4}]$}&\textbf{Set}&\\\hline 
			{LS}		&0.56	&22.0	&22.3	&-&-\\
			{RegLS}		&0.31	&\textbf{3.67}	&\textbf{3.77}	&-&$\checkmark$\\   
			{RLS}  		&10.66	&23.7	&137.4	&standard&-\\ 	
			{SRLS} 		&4.16	&22.5	&39.8	&structured&-\\ 
			{KRLS} 		&1.68	&\textbf{1.91}	&\textbf{4.74}	&kernel-based&-\\ 
			{RRegLS}	&10.35	&47.3	&154.4	&standard&$\checkmark$\\   
			{SRRegLS}	&1.76	&5.33	&8.52&structured&$\checkmark$\\  
			{KRRegLS}	&1.70	&\textbf{1.90}	&\textbf{4.79}	&kernel-based&$\checkmark$\\  
			\hline
		\end{tabular}
	}
	\caption{Statistics for different methods.}
	\label{tbl:mc}
\end{table}

\section{Conclusions}\label{sec:con}
In this paper, we have shown a novel feature of the kernel-based system identification method concerning robustness to the input disturbances. We have proved that the regularized kernel-based approach can be reformulated as a robust least-squares problem with an uncertainty set defined based on the kernel-matrix. Using extensive numerical experiments and comparisons, we have studied the nature of this new uncertainty set. It has been verified that the robust least square identification approach with the kernel-based uncertainty set is robust with respect to input disturbances and retains other features of the kernel-based approach as well. 

\appendix
\section{Appendix} 
\subsection{Duality for Second-Order Cone Programming}
\label{sec:apendix:SOCP_dual}
Consider the following second-order cone programming
\begin{equation}\label{eqn:app_SOCP}
\begin{split}
\minOp_{\vcx\in\Rbb^n}
& \quad
\vcc^\tr \vcx
\\
\mathrm{s.t.}\
& \quad
\|\mxA \vcx-\vcd\| \le \vca^\tr \vcx,
\\
& \quad
\|\mxB \vcx\| \le \vcb^\tr \vcx,
\end{split} \tag{P}
\end{equation}
where $n,m,k$ are positive integers, $\vca,\vcb,\vc{c}\in\Rbb^n$, $\mxA\in\Rbb^{m\times n}$, $\mxB\in\Rbb^{k\times n}$, and $\vcd\in\Rbb^{m}$.
Then, the dual of \eqref{eqn:app_SOCP} is the following convex program
\begin{equation}\label{eqn:app_SOCP_dual}
\begin{split}
\maxOp_{s,t,\vcz,\vcw} 
& \quad
\vcd^\tr \vcz
\\
\mathrm{s.t.}\
& \quad
\mxA^\tr\vcz+\mxB^\tr\vcw + \vca \!\ s+ \vcb \!\ t = \vcc,
\\
& \quad
\|\vcz\|\le s, 	
\\
& \quad
\|\vcw\|\le t,
\\
& \quad
\vcz\in\Rbb^{m},\vcw\in\Rbb^{k}, s,t\in\Rbb.
\end{split} \tag{D}
\end{equation}
Note that when the dual problem \eqref{eqn:app_SOCP_dual} is feasible and bounded, then the primal problem \eqref{eqn:app_SOCP} is also feasible and bounded. Moreover, they attain same optimal values when strong duality holds. 
For more details, see \cite{boyd2004convex}.

\subsection{Supplementary Lemmas}
\label{sec:Lemma_appendix}
\begin{lemma}\label{lem:Cb_le_Cfb}
Let $\vca\in\Rbb^n$ and $\mxD\in\Rbb^{m\times n}$. Then, we have $\|\mxD\vca\|^2\le \|\mxD\|_{\fro}^2\|\vca\|^2$. The equality occurs iff  there exists vector $\vcc\in\Rbb^n$ such that $\mxD = \vcc\!\ \vca^\tr$.
\end{lemma}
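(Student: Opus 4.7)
The plan is to reduce the matrix inequality to a coordinatewise application of the Cauchy–Schwarz inequality by working with the rows of $\mxD$. Write $\mxD$ in terms of its rows as $\mxD = [\vcd_1, \ldots, \vcd_m]^\tr$ with $\vcd_i \in \Rbb^n$. Then $\mxD\vca$ is the vector with entries $\vcd_i^\tr \vca$, so
\begin{equation*}
\|\mxD\vca\|^2 \;=\; \sum_{i=1}^m (\vcd_i^\tr \vca)^2,
\qquad
\|\mxD\|_{\fro}^2 \;=\; \sum_{i=1}^m \|\vcd_i\|^2 .
\end{equation*}

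For the inequality, I would apply Cauchy–Schwarz to each term on the left: $(\vcd_i^\tr \vca)^2 \le \|\vcd_i\|^2 \|\vca\|^2$. Summing over $i$ and factoring out $\|\vca\|^2$ yields the claimed bound $\|\mxD\vca\|^2 \le \|\mxD\|_{\fro}^2 \|\vca\|^2$. This step is essentially one line and carries no real difficulty.

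For the equality characterization, I would argue as follows. If $\vca = \zero$ the statement is trivial (any $\mxD$ works and can be written as $\vcc \vca^\tr$). Otherwise, equality in the summed inequality forces equality in Cauchy–Schwarz for every row, i.e.\ $\vcd_i$ must be parallel to $\vca$ for each $i$. Hence there exist scalars $c_1,\ldots,c_m$ with $\vcd_i = c_i \vca$, which is equivalent to saying $\mxD = \vcc \!\ \vca^\tr$ with $\vcc=(c_1,\ldots,c_m)^\tr$. Conversely, any rank-one matrix of this form saturates Cauchy–Schwarz in every row and therefore achieves equality. The only subtlety is the trivial case $\vca = \zero$, which I would handle separately at the start.

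There is no real obstacle here; the proof is a direct application of Cauchy–Schwarz to the rows, and the equality condition is the standard equality case of Cauchy–Schwarz lifted to a sum. The only bookkeeping point worth flagging is the apparent dimension mismatch in the statement (the vector $\vcc$ in the factorization $\mxD = \vcc \vca^\tr$ naturally lives in $\Rbb^m$, matching the number of rows of $\mxD$), which I would note briefly but otherwise not dwell on.
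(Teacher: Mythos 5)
Your proof is correct and follows essentially the same route as the paper's: a row-by-row application of Cauchy--Schwarz, summed over rows, with the equality case characterized by every row being a scalar multiple of $\vca$. Your side remark about the dimension of $\vcc$ is also right --- it should live in $\Rbb^m$, matching the construction in the paper's own proof, so the $\Rbb^n$ in the lemma statement is a typo.
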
	
\begin{proof}
We know that 
\begin{equation}\label{eqn:lemma_app_1_1}
\begin{split}
\|\mxD\vca\|^2 
&=
\sum_{i=1}^m(\sum_{j=1}^n d_{ij} a_j)^2
\\&\le
\sum_{i=1}^m(\sum_{j=1}^n d_{ij}^2) \|\vca\|^2
= 
\|\mxD\|_{\fro}^2\|\vca\|^2,
\end{split}
\end{equation}
where the Cauchy-Schwartz inequality is used in the second step.
For any $i=1,\ldots,m$, due to the equality condition for Cauchy-Schwartz theorem, we know that $(\sum_{j=1}^n d_{ij} a_j)^2 = (\sum_{j=1}^n d_{ij}^2) \|\vca\|^2$ iff  there exists $c_i\in\Rbb$ such that we have 
$\begin{bmatrix}d_{i1}&\ \ldots\ &d_{in}\end{bmatrix}= c_i \vca^\tr$.
Therefore, the equality occurs in \eqref{eqn:lemma_app_1_1} iff  there exist $\vcc\in\Rbb^n$ such that $\mxD = \vcc\!\ \vca^\tr$.	
\end{proof}
\begin{lemma}
\label{lem:inner_problem_robust}
Let $\rho>0$, $\vca\in\Rbb^n$, $\vcb\in\Rbb^m$ and $\mxR\in\Rbb^{n\times n}$ be an invertible matrix. 
Then, for the following optimization
\begin{equation}\label{eqn:lemma_app_2_1}
\max_{\substack{\Delta\in \Rbb^{m\times n}\\ \|\Delta\mxR\|_{\fro}\le \rho}}
\ \|\Delta\vca+\vcb\|,
\end{equation}
the optimal value equals $\|\vcb\| +  \|\rho\mxR^{-1}\vca\|$.
\end{lemma}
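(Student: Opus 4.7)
The plan is to absorb the $\mxR$-dependence by the change of variable $\tilde\Delta := \Delta\mxR$ (a bijection on $\Rbb^{m\times n}$ since $\mxR$ is invertible), reducing the problem to the standard Frobenius-ball maximization
\[
\max_{\|\tilde\Delta\|_{\fro}\le\rho}\ \|\tilde\Delta\vcz+\vcb\|, \qquad \vcz:=\mxR^{-1}\vca.
\]
I would then combine the triangle inequality with Lemma~\ref{lem:Cb_le_Cfb} to obtain a matching upper bound, and exhibit an explicit rank-one maximizer to prove attainment.

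For the upper bound, the triangle inequality gives $\|\tilde\Delta\vcz+\vcb\|\le\|\tilde\Delta\vcz\|+\|\vcb\|$, and Lemma~\ref{lem:Cb_le_Cfb} applied with $\mxD=\tilde\Delta$ yields $\|\tilde\Delta\vcz\|\le\|\tilde\Delta\|_{\fro}\|\vcz\|\le\rho\|\mxR^{-1}\vca\|$. Hence the supremum in \eqref{eqn:lemma_app_2_1} is at most $\|\vcb\|+\rho\|\mxR^{-1}\vca\|$. This step is routine.

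The crux is attainment: I must display a feasible $\tilde\Delta$ that activates \emph{both} inequalities simultaneously. The rank-one ansatz $\tilde\Delta=\vcc\,\vcz^\tr$ meets the equality clause of Lemma~\ref{lem:Cb_le_Cfb} automatically; choosing $\vcc:=\rho\,\vcb/(\|\vcb\|\,\|\vcz\|)$ makes $\tilde\Delta\vcz=\rho\|\vcz\|\,\vcb/\|\vcb\|$, which is a nonnegative scalar multiple of $\vcb$ (so the triangle inequality is tight) and has Frobenius norm $\|\vcc\|\,\|\vcz\|=\rho$ (so the constraint is active). Setting $\Delta:=\tilde\Delta\mxR^{-1}$ then gives a maximizer of \eqref{eqn:lemma_app_2_1} with objective value $\|\vcb\|+\rho\|\mxR^{-1}\vca\|$, matching the upper bound.

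The only care needed is for the degenerate cases $\vcb=\zero$ or $\mxR^{-1}\vca=\zero$, where the construction of $\vcc$ divides by zero. If $\vcz=\zero$ the objective is the constant $\|\vcb\|$ and the formula holds trivially; if $\vcb=\zero$ any unit-norm direction for $\vcc$, rescaled so that $\|\vcc\|\|\vcz\|=\rho$, still achieves $\|\tilde\Delta\vcz\|=\rho\|\vcz\|$, and again the formula holds. Apart from this bookkeeping, the argument is a two-step application of the triangle inequality and Lemma~\ref{lem:Cb_le_Cfb}, with the rank-one alignment trick as the only real idea.
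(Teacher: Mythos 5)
Your proof is correct and follows essentially the same route as the paper's: the same triangle-inequality plus Lemma~\ref{lem:Cb_le_Cfb} upper bound, the same rank-one maximizer (your $\tilde\Delta\mxR^{-1}=\vcc\,(\mxR^{-1}\vca)^\tr\mxR^{-1}$ is exactly the paper's $\Delta^*$), and the same separate treatment of the degenerate cases $\vcb=\zero$ and $\vca=\zero$. The substitution $\tilde\Delta=\Delta\mxR$ is only a cosmetic repackaging of the paper's direct manipulation of $\Delta\mxR$.
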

\begin{proof}
Since for $\vca=\zero$, the result is straightforward, we assume $\vca\ne \zero$. 
Also, let $\vcb \ne \zero$. 
We know that 
\begin{equation}\label{eqn:lemma_app_2_4}
\|\Delta\vca+\vcb\|
\le
\|\Delta\vca\| + \|\vcb\| ,
\end{equation}	
where the equality holds if there exists $\eta>0$ such that 
$\vcb = \eta \Delta \vca$.
Moreover, from Lemma \ref{lem:Cb_le_Cfb}, we have that
\begin{equation}\label{eqn:lemma_app_2_5}
\begin{split}
\|\Delta\vca\| 
&=
\|\Delta\mxR\!\ \mxR^{-1}\vca\|
\\&\le 
\|\Delta\mxR\|_{\fro} \|\mxR^{-1}\vca\|
\\&\le 
\rho \|\mxR^{-1}\vca\|,
\end{split}
\end{equation}	
for any $\Delta\in\Rbb^{m\times n}$ such that $\|\Delta\mxR\|_{\fro}\le \rho$.
The equality occurs in \eqref{eqn:lemma_app_2_5} iff  there exists $\vcc\in\Rbb^m$ such that $\Delta\mxR = \vcc (\mxR^{-1}\vca)^\tr$ and $\|\Delta\mxR\|_{\fro} = \rho$.
From \eqref{eqn:lemma_app_2_4} and \eqref{eqn:lemma_app_2_5}, we have
\begin{equation}\label{eqn:lemma_app_2_6}
\|\Delta\vca+\vcb\|
\le 
\Big(
\rho \|\mxR^{-1}\vca\|+ \|\vcb\|\Big)^2,
\end{equation}
for any $\Delta\in \Rbb^{m\times n}$ such that $\|\Delta\mxR\|_{\fro}\le \rho$.
Note that the right-hand side in \eqref{eqn:lemma_app_2_6} does not depend on $\Delta$ and therefore, it is an upper bound for \eqref{eqn:lemma_app_2_1}.
If the equality holds in \eqref{eqn:lemma_app_2_6}, for a given $\Delta^*$, the  equality conditions mentioned above should be satisfied. More precisely, we need to have  $\Delta^* = \vcc(\mxR^{-1}\vca)^\tr\mxR^{-1}$, for some $\vcc\in\Rbb^m$ such that $\|\vcc\|\|\mxR^{-1}\vca\|= \rho$ 
and there exists $\eta>0$ such that
\begin{equation}
\vcb 
=
\eta\ \! \vcc\ \!  (\mxR^{-1}\vca)^\tr\mxR^{-1}\vca
=
\eta\ \!  \|\mxR^{-1}\vca\|^2\ \! \vcc.
\end{equation}
Consequently, these equality holds iff   
\begin{equation}
\vcc = \frac{\rho\!\ \vcb}
{\|\vcb\|\ \! \|\mxR^{-1}\vca\|},\quad 
\eta = \frac{\rho\!\  \|\vcb\| }{\|\mxR^{-1}\vca\|}.	
\end{equation}
Therefore, \eqref{eqn:lemma_app_2_1} has a unique solution $\Delta^*$ defined as
\begin{equation}\label{eqn:lemma_app_2_2_proof}
	\Delta^* := 
	\frac{\rho\!\  \vcb\!\ (\mxR^{-1}\vca)^\tr\mxR^{-1}}{\|\vcb\|\ \! \|\mxR^{-1}\vca\|}.
\end{equation}
Moreover, the optimal value of \eqref{eqn:lemma_app_2_1} equals to the right-hand side of \eqref{eqn:lemma_app_2_6}. For the case $\vcb = \zero$, due to inequality \eqref{eqn:lemma_app_2_5}, we know that
\begin{equation}\label{eqn:lemma_app_2_7}
\|\Delta\vca+\vcb\|  = \|\Delta\vca\| 
\le 
\rho \|\mxR^{-1}\vca\|
=
\rho \|\mxR^{-1}\vca\| + \|\vcb\|,
\end{equation}	
for any $\Delta\in\Rbb^{m\times n}$ such that $\|\Delta\mxR\|_{\fro}\le \rho$. The  equality holds for $\Delta^*$ iff  these exists $\vcc\in\Rbb^m$ such that 
$\Delta^*\mxR = \vcc(\mxR^{-1}\vca)^\tr$ and $\|\Delta^*\mxR\|_{\fro}=\rho$, or equivalently 
$\Delta^* = \vcc(\mxR^{-1}\vca)^\tr\mxR^{-1}$
and
$\|\vcc\| = \frac{\rho}{\|\mxR^{-1}\vca\|}$. 
Replacing $\vcc$ with $\frac{\rho\!\ \vcu}{\|\mxR^{-1}\vca\|}$, where $\vcu$ is a unit vector in $\Rbb^m$, we obtain $\Delta^*$ as 
\begin{equation}
\label{eqn:lemma_app_2_2b_proof}
\Delta^* := 
\frac{\rho\!\  \vcu\!\ (\mxR^{-1}\vca)^\tr\mxR^{-1}}{\|\mxR^{-1}\vca\|}.
\end{equation}
This concludes the proof. 
\end{proof}

\bibliographystyle{IEEEtran}
\bibliography{mybib}
\end{document}